\documentclass[a4paper,11pt]{amsart}
\usepackage{hyperref, latexsym}
\usepackage{enumerate}
\usepackage{graphicx}
\usepackage{color}
\usepackage{amsmath}
\usepackage{comment}

\theoremstyle{plain}
\newtheorem{theorem}{Theorem}[section]
\newtheorem{lemma}[theorem]{Lemma}
\newtheorem{corollary}[theorem]{Corollary}
\newtheorem{proposition}[theorem]{Proposition}
\theoremstyle{definition}

\theoremstyle{remark}

\begin{document}

\title[]
      {Integrable outer billiards and rigidity}

\date{5 October 2023}
\author{Misha Bialy}
\address{School of Mathematical Sciences, Raymond and Beverly Sackler Faculty of Exact Sciences, Tel Aviv University,
Israel} 
\email{bialy@tauex.tau.ac.il}

\thanks{MB is partially supported by ISF grant 580/20 and DFG grant MA-2565/7-1  within the Middle East Collaboration Program.}

\begin{abstract}

In the present paper, we introduce a new generating function for outer billiards in the plane. Using this generating function, we prove the following rigidity result: if the exterior of the smooth convex plane curve $\gamma$ of positive curvature is foliated by continuous curves which are invariant under outer billiard map, then the curve $\gamma$ must be an ellipse.
	In addition to the new generating function used in the proof, we also overcome the non-compactness of the phase space by finding suitable weights in the integral-geometric part of the proof. Thus, we reduce the result to the Blaschke-Santalo inequality.

\end{abstract}
\maketitle


\section{Introduction}

In this paper, we address the question of integrability of outer billiards.
Throughout this paper, we consider a $C^2$-smooth strictly convex closed curve $\gamma$ in the plane of positive curvature. The outer billiard map $T$ acts in $\Omega$, the exterior of $\gamma$, as follows:

Given a point $A\in\Omega$, its image $T(A)$ is defined by the condition that the segment $[A,T(A)]$ is tangent to $\gamma$ exactly at the middle of the segment. The map $T$ is a symplectic diffeomorphism  
of $\Omega$ with respect to the standard symplectic form of the plane. Thus, $\Omega$ is the phase space of the outer billiard.

The model of outer billiards was introduced by B. Neumann in the late 1950s \cite{neumann} and even earlier in 1945 by M. Day \cite{day}. Thereafter, Jürgen Moser popularized the system in the 1970s as a toy model for celestial mechanics \cite{moser}\cite{moser1}. 

Given a billiard curve $\gamma$ of class $C^r$, the billiard map $T$ is of class $C^l$, where $l=r-1$. If $l$ is sufficiently large, then the Moser twist mapping theorem \cite{moser1} applies (see \cite{douady} for a proof of this application). It then follows that there always exist invariant curves of the outer billiard arbitrary close to infinity, and therefore all orbits of the billiard are bounded. The initial requirement, $l\geq 333$, in Moser' theorem was improved by H.~Russmann \cite{russmann}, $l>5$, and then by M.~Herman's  \cite 
{herman} showing that $l>2$ is enough. 

Later, unbounded orbits for outer billiards were proven to exist for curves which are not $C^1$-smooth, starting from the work of R. Schwartz \cite{schwartz}.  In \cite{Tab}, unbounded orbits were discovered for the semi-circle by computer experiments, and were confirmed theoretically in \cite{bassam}. Apparently, nothing is known on this problem for the smoothness of $\gamma$ between $C^1$ and $C^r,r>3$.

 In this paper, we address the natural question which is analogous to Birkhoff-Poritsky conjecture for usual billiards (see \cite{K-S}\cite{G}\cite{bialy-mironov}\cite{koval} for recent progress):
 
{\it Are there integrable outer billiards in the plane other than ellipses?}
 
 For the algebraic version of this question, the answer is negative, as shown in \cite{T} \cite{glutsyuk-shustin}.

 It is easy to see from the definition of outer billiard of $\gamma$ that the group of affine transformations commutes with outer billiard map. Thus, the outer billiard map of the ellipse preserves the foliation of $\Omega$ by concentric homothetic ellipses (since this is obviously the case for a circle).

 \begin{figure}[h]\label{}
	\centering
	\includegraphics[width=0.8\linewidth]{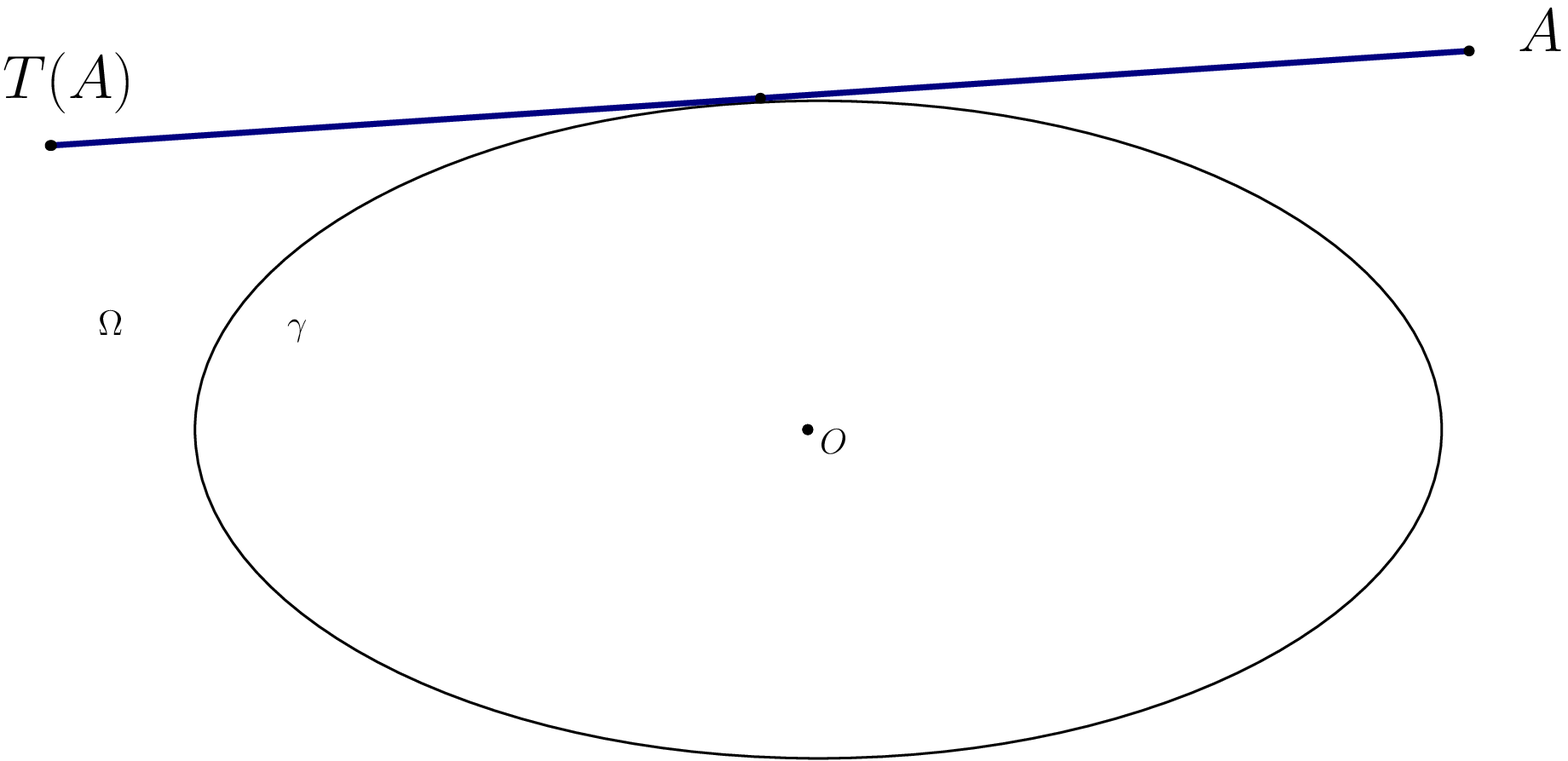}
	\caption{Outer billiard map}
\end{figure}

We now turn to formulate our main results.

\begin{theorem}\label{thm:total}
	Assume that the outer billiard of $\gamma $ is totally integrable, i.e., the phase space $\Omega$ is foliated by continuous rotational (i.e., non-contractable in $\Omega$) invariant curves, then $\gamma$ is an ellipse.
\end{theorem}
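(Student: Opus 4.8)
\section*{Proof proposal}

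The plan is to transport the integral-geometric strategy developed for the Birkhoff--Poritsky problem to the outer billiard, using the new generating function to replace the standard action and using weights to handle the unbounded phase space. First I would fix coordinates on $\Omega$ adapted to the tangent-line description of $T$: encode a point $A\in\Omega$ by the parameter $t$ of the tangency point on $\gamma$ and the signed half-length $s$ along the tangent, so that $T$ acts as the point reflection $A\mapsto 2\gamma(t)-A$ together with the advance of the tangency. In these coordinates the new generating function $S$ exhibits $T$ as an exact symplectic twist map; the twist property then forces every rotational invariant curve of the assumed foliation to be a Lipschitz graph over the circle of tangent directions.

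From the foliation I would extract a continuous first integral $I$ on $\Omega$ whose level sets are the invariant curves, together with a continuous family of rotation numbers. The heart of the argument is a pointwise differential identity furnished by $S$: along each leaf the dynamics is conjugate to a rotation, so the generating function produces an exact $1$-form whose average over a leaf is pinned down by the rotation number. Integrating this identity over all of $\Omega$, and using that the leaves sweep out the whole phase space, I would obtain an equality that expresses a purely geometric functional of $\gamma$ in terms of the foliation data, in a form amenable to an a priori integral-geometric bound.

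The decisive step is to make these integrals converge despite the non-compactness of $\Omega$, which extends to infinity. I would introduce a weight $w$, a convergence factor built from the growth of $s$, chosen so that (i) the weighted integrals converge, (ii) $w$ is compatible with the symplectic and dynamical structure, so that the generating-function identity survives integration, and (iii) $w$ does not disturb the equality case. With the correct weight the resulting a priori inequality is exactly of Blaschke--Santal\'{o} type, relating the area enclosed by $\gamma$ to the area of its polar dual with respect to a suitable center; the affine invariance of the outer billiard should match the affine invariance of that inequality, which is a strong consistency check on the choice of $w$.

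Finally, since total integrability forces the Blaschke--Santal\'{o} inequality to be saturated, I would invoke its equality case, which characterizes ellipses, to conclude that $\gamma$ is an ellipse. I expect the main obstacle to be precisely the construction of the weight in the third step: it must simultaneously tame the growth at infinity and remain dynamically compatible while preserving the rigidity in the equality case, since a poorly chosen $w$ would either diverge or break the ellipse characterization. The abstract's emphasis on ``suitable weights'' confirms that this is where the genuine difficulty lies.
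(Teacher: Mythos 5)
There is a genuine gap, and it sits exactly where your second paragraph waves its hands. The paper never extracts a first integral, never uses rotation numbers, and never integrates a ``leaf-averaged exact $1$-form'': there is no such identity in the argument, and your outline gives no mechanism by which total integrability would produce an inequality (in a definite direction) that Blaschke--Santal\'o could then saturate. The actual engine is E.~Hopf-type rigidity. First, the foliation is converted into a variational statement: since $S_{12}<0$ in symplectic polar coordinates $(p,\phi)=(r^2/2,\phi)$ about an interior origin, Birkhoff's theorem makes every rotational invariant curve a Lipschitz graph $r=r(\phi)$ over the polar angle (star-shaped --- not, as you write, a graph over the circle of tangent directions), and the local-minimality criterion of \cite{Bialy-Tsodikovich}, together with closedness of the set $\mathcal M$ of locally minimizing orbits, shows that a foliation forces \emph{all} orbits to be locally minimizing (this is the reduction of Theorem \ref{thm:total} to Theorem \ref{thm:maximizing}). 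Second, along each locally minimizing orbit one builds a positive discrete Jacobi field $\{\delta q_n\}$ and sets $\omega=\delta p_n/\delta q_n$; the evolution relations (\ref{relations}), multiplied by $B^2$ and $A^2$ with $B=A\circ T$ and combined with the arithmetic-geometric mean inequality (using $S_{12}<0$), give the pointwise quasi-coboundary bound $B^2\,\omega\circ T-A^2\,\omega\le A^2S_{11}+2ABS_{12}+B^2S_{22}$. Integrating over compact invariant annuli $\Omega_{\beta\gamma}$ (this is where the invariant curves are used again, to exhaust $\Omega$) kills the left-hand side and yields $\int_\Omega\left[A^2S_{11}+2ABS_{12}+B^2S_{22}\right]d\mu\ge 0$ (Theorem \ref{thm:inequality}). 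Nothing in your proposal replaces this construction.

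Two further corrections of emphasis. The weight is not a free ``convergence factor built from the growth of $s$'': the constraint $B=A\circ T$ is structural (it is what makes the $\omega$-terms telescope against the invariant measure), and the paper's specific choice $A=r_0^{-2}=-1/(2S_1)$, $B=r_1^{-2}=1/(2S_2)$ is what makes the $t$-integrals computable in closed form, giving the concrete inequality $\int_0^{2\pi}\sqrt{\chi}/r\,d\phi\ge 2\pi$ with $\chi=r^2+2r'^2-rr''$ (Proposition \ref{prop1}). The reverse inequality is pure geometry, not ``forced saturation'': one must place the origin at the Santalo point, substitute $h=1/r$ (the support function of the polar dual $\Gamma^*$), and apply Cauchy--Schwarz to get $\int_0^{2\pi}\sqrt{\chi}/r\,d\phi\le 2\sqrt{Area(\Gamma)\,Area(\Gamma^*)}\le 2\pi$ by Blaschke--Santal\'o (Proposition \ref{prop2}), whose equality case then yields the ellipse. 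Your final paragraph correctly anticipates the Blaschke--Santal\'o endgame and the role of weights, but without the Jacobi-field/$\omega$ machinery and the two opposing inequalities, the proposal is a plausible research plan rather than a proof.
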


This result can be  stated in variational terms, as we now turn to explain. In Section 2, we shall introduce a non-standard generating function $S$ for the outer billiard, that corresponds to the symplectic polar coordinates $(p,\phi)$ on $\Omega$, where $p=r^2/2$, $r$ is the radial distance, and $\phi$ is the polar angle.  Then $dp\wedge d\phi$ is the standard symplectic form.
Moreover, we will prove that function $S$ satisfies the {\it positive} twist condition, $S_{12}(\phi_0,\phi_1)<0$ (Theorem \ref{thm:twist} below). 

Consider the corresponding action functional (see e.g. the survey \cite{siburg}):

\begin{equation}\label{functional}\{\phi_n\}  \rightarrow   \sum_{-\infty}^{+\infty}S(\phi_n,\phi_{n+1}).\end{equation}

The extremal configurations $\{\phi_n\}$ of (\ref{functional}) are in one-to-one correspondence with the orbits of the billiard map $T$, $\{(p_n,\phi_n)\}$, where
$$p_n=r_n^2/2=-S_1(\phi_n,\phi_{n+1})=S_2(\phi_{n-1},\phi_n).$$

We shall say that the extremal $\{\phi_n\}$ is  \emph {locally minimizing} if any finite sub-segment $\{\phi_n\}_{n=M}^N, M\leq N$ is a local minimum of the function
$$F_{M,N}(x_{M},...,x_{N})=S(\phi_{M-1},x_{M})+\sum_{i=M}^{N-1}S(x_{i},x_{i+1})+S(x_{N},\phi_{N+1}).$$We term the corresponding orbit $\{(p_n,\phi_n)\}$ as \emph{locally minimizing orbit}. 

We shall denote by $\mathcal M$ the subset of the phase space $\Omega$ swept by the locally minimizing orbits.
\begin{theorem}\label{thm:maximizing}
	If all orbits of the outer billiard $T$ are locally minimizing, then $\gamma$ is an ellipse.
	\end{theorem}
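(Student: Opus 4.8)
The plan is to reduce Theorem \ref{thm:maximizing} to Theorem \ref{thm:total}: I will argue that if every orbit of $T$ is locally minimizing, then $\Omega$ is foliated by continuous rotational invariant curves, and then Theorem \ref{thm:total} immediately forces $\gamma$ to be an ellipse. The whole reduction is made possible by Theorem \ref{thm:twist}, which supplies the positive twist $S_{12}<0$ and thereby places us in the setting of monotone twist maps, where the variational notion of local minimality admits an infinitesimal characterization.

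The first step is to translate the hypothesis into a statement about conjugate points. For a finite segment $\{\phi_n\}_{n=M}^{N}$ the Hessian of $F_{M,N}$ is a tridiagonal (discrete Sturm--Liouville) operator whose diagonal entries come from $S_{11}$ and $S_{22}$ and whose off-diagonal entries are the values of $S_{12}$ along the orbit; by the twist condition these off-diagonal entries keep a fixed sign. An orbit is locally minimizing exactly when this Hessian is positive semidefinite on every finite segment, which is precisely the discrete condition that the orbit carries \emph{no conjugate points}. Hence the assumption that all orbits are locally minimizing is equivalent to the absence of conjugate points throughout the phase space $\Omega$.

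The second step is to pass from the absence of conjugate points to an invariant foliation. Along each orbit the monotone behaviour of the Jacobi operator produces the Green line fields $G_\pm$, obtained as limits of the forward and backward images of the vertical direction; they are invariant, non-vertical, Lagrangian, and defined everywhere. Here I would invoke the theory of symplectic twist maps without conjugate points: the absence of conjugate points forces $G_-$ and $G_+$ to coincide into a single continuous invariant line field whose integral curves are graphs over the $\phi$-circle. These graphs are exactly continuous rotational invariant curves, so $\Omega$ is foliated as required and Theorem \ref{thm:total} applies.

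I expect the principal obstacle to be the non-compactness of $\Omega$, which must be confronted precisely when building the foliation from the local condition. One has to run the no-conjugate-points machinery on the non-compact cylinder, controlling the Green bundles and the resulting leaves both as they approach $\gamma$ and as $p \to \infty$; near infinity the map $T$ is a small perturbation of the circular integrable model, so invariant curves supplied by the twist theorem accumulate at infinity and anchor the foliation there, while the global absence of conjugate points rules out instability zones and fills in the remaining leaves. Making this covering argument uniform over the unbounded range of $p$, so that no slice of $\Omega$ is left uncovered and the leaves vary continuously, is the delicate point of the proof.
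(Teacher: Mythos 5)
Your reduction is circular in the context of this paper. You propose to deduce Theorem \ref{thm:maximizing} from Theorem \ref{thm:total}, but the paper's logical order is the reverse: Theorem \ref{thm:total} is itself proved \emph{from} Theorem \ref{thm:maximizing} (via Birkhoff's graph theorem and the local-minimality criterion of Bialy--Tsodikovich), and the paper contains no independent proof of Theorem \ref{thm:total} that you could invoke. The implication you want --- absence of conjugate points forces a foliation by rotational invariant curves --- is exactly the converse direction that Remark 2 of the paper mentions (Heber for geodesic flows; Cheng--Sun, Arcostanzo, Arnaud et al.\ for twist maps) and explicitly declines to use. Those are substantial theorems established for compact annuli or under additional hypotheses, and your sketch for extending them to the non-compact phase space $\Omega$ does not work as stated: you anchor the foliation near infinity by invariant curves ``supplied by the twist theorem,'' but the Moser--Herman twist theorem requires the billiard map to be at least $C^l$ with $l>2$, i.e.\ $\gamma$ of class $C^r$, $r>3$, whereas the standing assumption of the paper is only $\gamma\in C^2$ (so $T$ is merely $C^1$). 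Under the paper's hypotheses there need not be any invariant curves near infinity to anchor to, so the ``delicate point'' you flag is not a technicality but a genuine obstruction to your route.

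The paper proves Theorem \ref{thm:maximizing} directly, by E.~Hopf-type rigidity, and essentially none of that content appears in your proposal. Along each locally minimizing orbit one constructs a positive discrete Jacobi field and the measurable function $\omega=\delta p_n/\delta q_n$ satisfying the evolution relations (\ref{relations}); integrating the resulting pointwise inequality against the invariant measure with the specific weights $A=r_0^{-2}$, $B=r_1^{-2}$ (chosen to make the integral converge on the non-compact $\Omega$) yields, after explicit computation with the non-standard generating function $S=tr^2(\phi)$, the inequality $\int_0^{2\pi}\sqrt{\chi}\,r^{-1}\,d\phi\geq 2\pi$ (Theorem \ref{thm:inequality} and Proposition \ref{prop1}). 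Placing the origin at the Santalo point of $\Gamma$ and applying Cauchy--Schwarz together with the Blaschke--Santalo inequality gives the reverse inequality $\int_0^{2\pi}\sqrt{\chi}\,r^{-1}\,d\phi\leq 2\pi$, with equality only for ellipses (Proposition \ref{prop2}); combining the two forces equality and hence $\gamma$ is an ellipse. Your first step (local minimality $\Leftrightarrow$ positive semidefinite tridiagonal Hessians $\Leftrightarrow$ no conjugate points) is consistent with the paper's framework, but everything after it --- the Green bundles, the reconstruction of the foliation, and the appeal to Theorem \ref{thm:total} --- replaces the actual proof with machinery that is either unavailable at this regularity or logically downstream of the very statement being proved.
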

	Theorem \ref{thm:maximizing} implies the following geometric fact, which is equivalent to the existence of conjugate points for a billiard configuration.
	\begin{corollary}\label{conjugate}
		For any outer billiard, which is different from ellipse, there exists a radial tangent vector $v\in T_x\Omega, v=\frac{\partial}{\partial r}(x)$ and a positive $n$, such that after $n$ iterations the vector $DT^n(v)$ is radial again.
	\end{corollary}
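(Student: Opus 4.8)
The plan is to recognize the corollary as a restatement, in the variational language attached to the generating function $S$, of the contrapositive of Theorem \ref{thm:maximizing}. The bridge between the two formulations is the standard Jacobi/Morse theory for monotone twist maps, which applies here because the positive twist condition $S_{12}<0$ of Theorem \ref{thm:twist} plays the role of the Legendre (convexity) condition.

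First I would translate the geometric condition ``$DT^n v$ is radial'' into the variational notion of conjugate points. In the symplectic polar coordinates $(p,\phi)$ with $p=r^2/2$, a radial vector $v=\partial/\partial r = r\,\partial/\partial p$ satisfies $d\phi(v)=0$; that is, radial vectors are precisely the vectors vertical with respect to the projection $(p,\phi)\mapsto\phi$. Linearizing the generating-function relations $p_n=-S_1(\phi_n,\phi_{n+1})=S_2(\phi_{n-1},\phi_n)$ along an orbit $\{(p_n,\phi_n)\}$, the angular components $\xi_n:=\delta\phi_n$ of a tangent variation satisfy the discrete Jacobi equation
\begin{equation}\label{jacobi}
S_{12}(\phi_{n-1},\phi_n)\,\xi_{n-1}+\bigl[S_{22}(\phi_{n-1},\phi_n)+S_{11}(\phi_n,\phi_{n+1})\bigr]\xi_n+S_{12}(\phi_n,\phi_{n+1})\,\xi_{n+1}=0 .
\end{equation}
A vector $v$ at $x_0$ is radial precisely when $\xi_0=0$, and $DT^n v$ is radial precisely when $\xi_n=0$; since $S_{12}\neq 0$, a nonzero radial $v$ forces $\xi_1\neq 0$, so the associated Jacobi field is nontrivial. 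Hence the existence of a radial $v$ with $DT^n v$ radial is equivalent to the existence of a nontrivial solution of \eqref{jacobi} vanishing at two orbit indices $0$ and $n$, i.e. to $0$ and $n$ being conjugate along the orbit.

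Next I would invoke the discrete Morse/Jacobi theory (see the survey \cite{siburg}). The Hessian of the finite action $F_{M,N}$ is the tridiagonal Jacobi matrix with off-diagonal entries $S_{12}(\phi_i,\phi_{i+1})<0$ and diagonal entries $S_{22}(\phi_{i-1},\phi_i)+S_{11}(\phi_i,\phi_{i+1})$, and \eqref{jacobi} is exactly its kernel equation. By the discrete Sturm oscillation theorem the Morse index of $F_{M,N}$ equals the number of conjugate points inside the segment; in particular an orbit with no conjugate points is locally minimizing, and conversely an orbit that fails to be locally minimizing must possess a conjugate pair of indices $a<b$, giving a nontrivial solution of \eqref{jacobi} with $\xi_a=\xi_b=0$.

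Finally I would assemble the argument. If $\gamma$ is not an ellipse, then by the contrapositive of Theorem \ref{thm:maximizing} not all orbits are locally minimizing, so some orbit fails to be locally minimizing and therefore carries a conjugate pair $a<b$. Taking $v$ to be the radial vector at $x_a$ determined by this Jacobi field and setting $n=b-a$, the first step shows that $DT^n v$ is again radial, which is the assertion. The main obstacle is the second step: one must state the discrete Jacobi/Morse theory carefully enough, handling the Dirichlet boundary conditions of $F_{M,N}$ and the sign $S_{12}<0$, so that the mere failure of positive semidefiniteness of some finite Hessian is guaranteed to produce an honest conjugate pair of orbit indices rather than only an abstract negative direction.
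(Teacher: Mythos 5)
Your overall route is the same as the paper's: translate ``$DT^n v$ radial again'' into an exact conjugate pair (a nontrivial solution of the discrete Jacobi equation with $\xi_0=\xi_n=0$), and then use Theorem \ref{thm:maximizing} in contrapositive form. Your first and third steps match the paper. The genuine gap is exactly where you yourself flag it, and it is not a matter of careful bookkeeping: in \emph{discrete} Sturm--Morse theory for the tridiagonal Hessian of $F_{M,N}$ (off-diagonal entries $b_k=S_{12}<0$), the Morse index counts \emph{generalized} zeros of the principal solution --- indices with $\xi_k=0$ \emph{or} a sign change $\xi_k\xi_{k+1}<0$ --- not exact vanishings. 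Hence a single non-minimizing orbit need not carry any nontrivial Jacobi field vanishing exactly at two indices. Concretely, for $N=M$ the Hessian is the scalar $a_M=S_{22}(\phi_{M-1},\phi_M)+S_{11}(\phi_M,\phi_{M+1})$; if $a_M<0$, the principal solution with $\xi_{M-1}=0$, $\xi_M=1$ has $\xi_{M+1}=-a_M/b_M<0$, a sign change with no exact zero, and nothing forces an exact zero anywhere else along that orbit. So your inference ``fails to be locally minimizing $\Rightarrow$ conjugate pair with $\xi_a=\xi_b=0$ \emph{on that orbit}'' is unjustified as stated.

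The paper avoids this by quantifying over all configurations at once and arguing by contradiction: assume \emph{no} billiard configuration has an exact conjugate pair. Then every Dirichlet Hessian $\delta^2 F_{M,N}$ is nondegenerate, since a kernel vector, extended by zeros, is precisely a Jacobi field vanishing at the two boundary indices. Because for each fixed length the orbit segments form a connected family (parameterized by the connected phase space $\Omega$) and eigenvalues depend continuously on the segment, nondegeneracy everywhere freezes the signature, so by a continuity argument all these Hessians are positive definite (the paper is terse about the reference segment where positivity is checked, but that is the logical structure). Hence all orbits are locally minimizing, and Theorem \ref{thm:maximizing} forces $\gamma$ to be an ellipse --- a contradiction. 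In other words, the repair of your middle step is to trade the oscillation theorem on one bad orbit for an intermediate-value/continuity argument over the connected space of finite configurations: a negative direction somewhere, together with positive definiteness elsewhere, produces along a continuous path of segments a \emph{degenerate} Hessian and thus an exact conjugate pair, on some orbit but not necessarily the one you started with. With that replacement your argument coincides with the paper's proof.
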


Now, I will explain how Theorem  \ref{thm:total} follows from Theorem  \ref{thm:maximizing}. 
Given a rotational invariant curve of a positive twist symplectic map, then by Birkhoff theorem this curve is a graph of a Lipschitz function (see e.g.  \cite{siburg}).  Therefore, it is differentiable almost everywhere,  and hence almost every orbit $\{(p_n,q_n)\}$ on the curve has an invariant  tangent field $\{(\delta p_n,\delta q_n)\}$ with $\delta q_n>0$. It then follows from the criterion for local minimality (Theorem 1.1 of \cite{Bialy-Tsodikovich}) that almost every orbit on the invariant curve is locally minimizing. Since the set $\mathcal M$, which is swept by all locally minimizing orbits, is closed \cite{Bialy-Tsodikovich}, then {\it all} orbits on the rotational invariant curve are locally minimizing. Therefore, if there exists a foliation of $\Omega$ by rotational invariant curves, then all orbits in the phase space $\Omega$ are locally minimizing. Thus, Theorem  \ref{thm:total} follows from Theorem  \ref{thm:maximizing}. 

\smallskip

{\bf Remarks.}

	{\it 1. In fact M. Herman proved that all orbits on a rotational invariant curve are actually global minimizers.
	
	2. We will not use this in this paper, but in fact Theorem \ref{thm:total}  implies Theorem \ref{thm:maximizing}.
	Namely, if all orbits are locally minimizing, then one can  reconstruct the foliation by rotational invariant curves.  This was first performed by J. Heber \cite {heber} for geodesic flows, and then was extended to twist maps in \cite{cheng-sun},\cite{arcostanzo},\cite{arnaud}. 
	
	3. In \cite{Bialy-Tsodikovich}, we formulated the criterion for local maximality since  we considered there negative twist maps.}
\smallskip

The main idea in the proof of Theorem \ref{thm:maximizing} is to apply the so-called E. Hopf type rigidity for the case of billiard dynamics. 
This method has two parts. First, along every locally minimizing orbit one can construct a positive discrete Jacobi field and the corresponding auxiliary function $\omega$, which must satisfy certain evolution under $T$. 
The construction of the positive Jacobi field is the discrete analog of E. Hopf original
construction.
We refer to \cite{B0},\cite{Bialy-Tsodikovich}, and also Section 5 for the details. 

The second part of the method is to prove an integral geometric inequality, which can be consistent with the evolution of $\omega$ only for the case of ellipses.  

This rigidity method was first found in \cite{B0} for ordinary billiards. Later, it was realized for billiards on the sphere and hyperbolic plane \cite{B1} and also for magnetic billiards in \cite{B2}. Recently, it was also successfully applied in \cite{bialy-mironov} for Birkhoff-Poritsky conjecture in a centrally symmetric case. Interestingly, in the paper \cite{bialy-mironov}, the integral geometry part was performed with suitable weights. This is also the case here.

A new interesting class of symplectic billiards was introduced in \cite{albers-tabachnikov}. In a recent paper \cite{baracco-bernardi}, rigidity for this model of billiards was proven using new ideas.

The rigidity for outer billiard total integrability remained resistant for a long time due to two main difficulties. Firstly, the phase space $\Omega$ is not compact, which requires suitable weights for the integral geometric part of the proof. Secondly, the affine nature of the problem makes it harder than in the case of ordinary Birkhoff billiards.

In this paper, we present the correct weights and reduce the proof of the main theorems to the Blaschke-Santalo inequality of affine geometry. Crucial new tool in the proof is the non-standard generating function of outer billiards. This generating function is somewhat analogous to the one used in \cite{bialy-mironov} for usual billiards.

In Section 2 and 3, we construct the non-standard generating function $S$ and compute the derivatives for the change of variables. In Section 4, we find the second derivatives of $S$ and verify the twist condition.
In Section 5, we recall the properties of the function $\omega$ and its evolution.
In Section 6, we obtain an inequality which is valid under the assumption that all orbits are locally minimizing. In Section 7, we show that in fact, the converse inequality holds, provided the origin is chosen at the Santalo point of the curve $\gamma$. Finally, we prove Theorem \ref{thm:maximizing} and the Corollary at Section 8.

\section*{Acknowledgments}

I discussed the problem of integrable outer billiards with Sergei Tabachnikov for many years. I am thankful to him for illuminating discussions and for the encouragement. 

I would also like to thank Luca Baracco, Olga Bernardi, Yaron Ostrover, and Leva Buhovsky for interesting and helpful discussions. I am grateful to Oleg Shaynkman for his help in conducting computer simulations. 

I am grateful to Marie-Claude Arnaud for giving the reference to M.Herman result, and to the anonymous referees for careful reading and suggestions for improvement.

\section{Non-standard generating function}

We introduce a new generating function $S$ which is different from the one used in the previous papers, for example, from the one used in 
\cite{boyland}\cite{douady}\cite{gk}\cite{Tab} and the book \cite{tabachnikov}.

Let us fix the origin $O$ to be an arbitrary point inside the curve $\gamma$. Later, in Section 7, we will need to specify the origin to be at the Santalo point of the convex body $\Gamma$, which is bounded by $\gamma$.

Take a standard symplectic form in the plane and use the symplectic polar coordinates with respect to $O$: $$dx\wedge dy=dp\wedge d\phi =rdr\wedge d\phi, p:=r^2/2.$$

For the billiard map $$T:(p_0,\phi_0)\mapsto(p_1,\phi_1), \ p_i=r_i^2/2,\  i=0,1,$$ we wish to find the generating function corresponding to the primitive 1-form $\alpha=pd\phi=(r^2/2)d\phi$. Therefore, we need to find a function $S(\phi_0,\phi_1)$ depending on the two angles such that
\begin{equation}\label {generatingS}
	T^*\alpha-\alpha=dS\quad  \Leftrightarrow   \quad S_1=-p_0=-r_0^2/2, \ S_2=p_1=r_1^2/2.
	\end{equation}

 Here and below, we shall use sub-indices 1 and 2 of $S$ for the partial derivatives with respect to $\phi_0,\phi_1$ respectively.

The function $S(\phi_0,\phi_1)$ can be easily found from geometric considerations, but we will also give the computational proof below. Given the values $\phi_0, \phi_1$,  $\phi_0<\phi_1<\phi_0+\pi$, consider the segment with the ends lying on the rays with the angles $\phi_0$ and $\phi_1$, which is tangent to the curve $\gamma$ exactly at the middle (Figure \ref{generating}). 
One can easily see that the point $M$ is the tangency point of $\gamma$ with a
hyperbola whose asymptotes are the rays in the directions $\phi_0$ and $\phi_1$ (the envelope
of the lines that cut off a fixed area from an angle is a hyperbola whose asymptotes are
the sides of the angle).

\begin{figure}[h]\label{generating}
	\centering
	\includegraphics[width=0.7\linewidth]{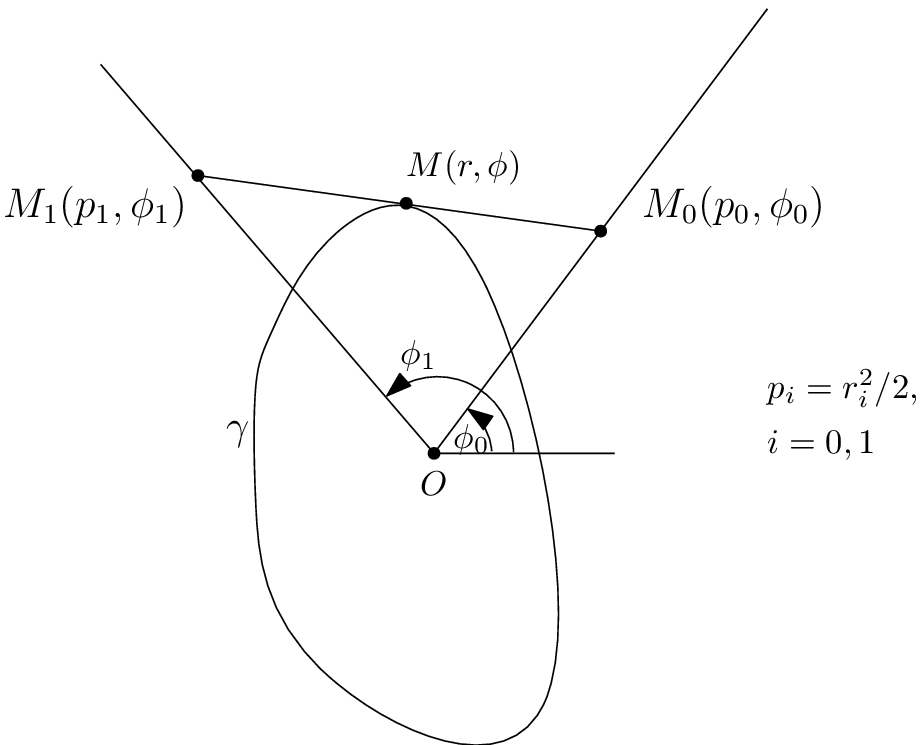}
	\caption{Generating function $S$; The point $M$ is the tangency point of $\gamma$ with a
		hyperbola whose asymptotes are the rays in the directions $\phi_0$ and $\phi_1$}
\end{figure}
Then, we define $S(\phi_0,\phi_1)$ to be the area of the triangle $\triangle M_0OM_1$ bounded by the two rays and the segment. 
 
 With this definition of function $S$, Figure \ref{Sderivative} gives a geometric explanation of (\ref{generatingS}).
 Indeed, let us consider $\tilde\phi_1:=\phi_1+\epsilon$ and compute the difference:
 $$
 \delta:=S(\phi_0,\phi_1+\epsilon)-S(\phi_0,\phi_1)=Area(\triangle \tilde M_0O\tilde M_1)-Area( \triangle M_0OM_1)
 $$

 Then we have (see Figure \ref{Sderivative}):
 \begin{figure}[h]
 	\centering
 	\includegraphics[width=0.7\linewidth]{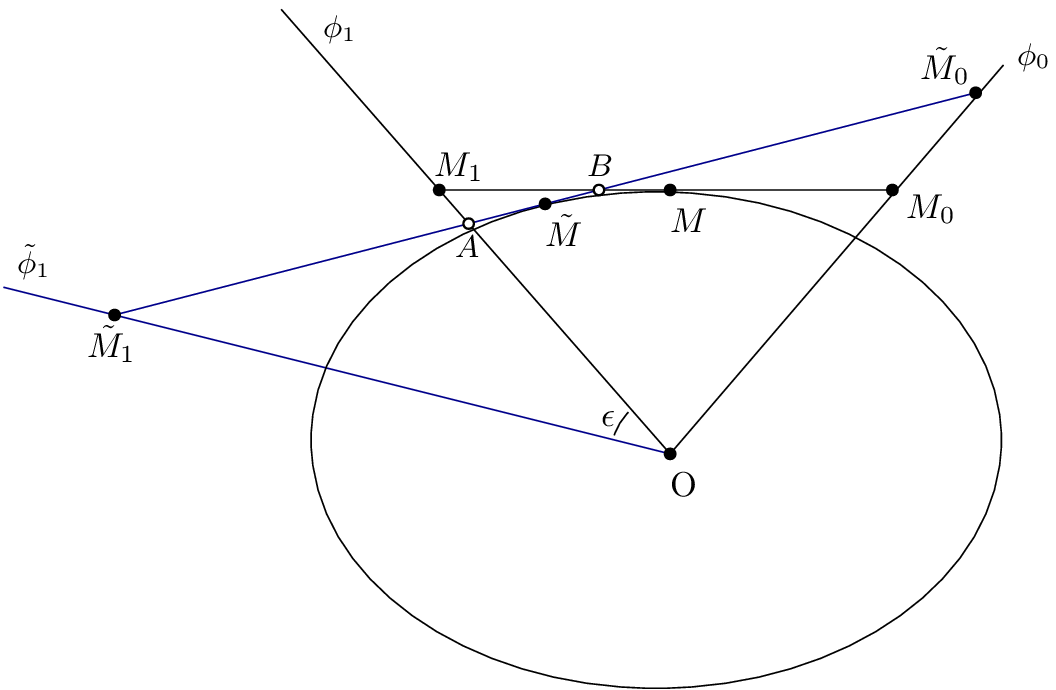}
 	\caption{Geometric derivation of $S_2=r_1^2$.}
 	\label{Sderivative} 
 \end{figure}
  $$
 \delta=Area(\triangle AO\tilde M_1)+Area(\triangle M_0B\tilde M_0)-Area(\triangle ABM_1)
 $$
 Moreover, we have $$
 Area(\triangle M_0B\tilde M_0)-Area(\triangle ABM_1)=o(\epsilon),
 $$
 since $M$ is the midpoint of the segment $[M_0,M_1]$.
 
  Therefore, $$\frac {\partial S(\phi_0,\phi_1)}{\partial\phi_1}=\lim_{\epsilon\rightarrow 0}\frac{\delta}{\epsilon}=\lim_{\epsilon\rightarrow 0}\frac{Area(\triangle AO\tilde M_1)}{\epsilon}=\frac{|OM_1|^2}{2}= \frac{r_1^2}{2},$$ as required in (\ref{generatingS}).

We shall compute now $S$ as follows. Let $\gamma$ be parameterized by the polar angle $\phi$, and the radial function of $\gamma$ will be denoted by $r(\phi)$. Hence, $$\gamma(\phi)=r(\phi)e_\phi, \quad \gamma'(\phi)=r'(\phi)e_\phi+r(\phi)e^\perp_\phi,$$
where $e_\phi, e^\perp_\phi$ are the unit vectors in the directions $\phi, \phi +\pi/2$. 
We shall write:
$$ M=\gamma(\phi),\ M_0= \gamma(\phi)-t\gamma'(\phi), \  M_1= \gamma(\phi)+t\gamma'(\phi).$$
With these formulas, $S$ looks especially simple:
$$S=tr^2(\phi).$$

\section {Derivatives for the change of variables}
In addition, we have the explicit formulas for the transition from $(\phi,t)$ to $(\phi_0,\phi_1)$ and for $r_0,r_1$.

$$\phi_0=\phi-\arctan\left(\frac{tr}{r-tr'}\right),\ \phi_1=\phi+\arctan\left(\frac{tr}{r+tr'}\right)$$
$$r_0^2=(r(\phi)-tr'(\phi))^2+t^2r(\phi)^2,\ \  r_1^2=(r(\phi)+tr'(\phi))^2+t^2r(\phi)^2.
$$

We shall introduce the following notation:
$$
\chi:=r^2(\phi)+2r'^2(\phi)-r(\phi)r''(\phi),
$$
which is the numerator of the formula for the curvature $k$ of $\gamma$ in the polar coordinates, $$k=\frac{\chi}{(r^2+r'^2)^{\frac{3}{2}}},$$ and hence is strictly positive.

The Jacobian matrix $J=\frac{\partial(\phi_0,\phi_1)}{\partial(\phi,t)}$ and the inverse can be easily computed.

\begin{flalign*}
&	\partial_{\phi}\phi_0=1-\frac{t^2 \left(-\chi+\mathit{r}(\phi )^2+\mathit{r}'(\phi )^2\right)}{\mathit{r}(\phi )^2+t^2 \mathit{r}(\phi )^2-2 t \mathit{r}(\phi
		) \mathit{r}'(\phi )+t^2 \mathit{r}'(\phi )^2}=1-\frac{t^2 \left(-\chi+\mathit{r}^2+\mathit{r}'^2\right)}{r_0^2}\\
&
\partial_{t}\phi_0=-\frac{\mathit{r}(\phi )^2}{\mathit{r}(\phi )^2+t^2 \mathit{r}(\phi )^2-2 t \mathit{r}(\phi ) \mathit{r}'(\phi )+t^2 \mathit{r}'(\phi
	)^2}=-\frac{r^2}{r_0^2},\\
&
\partial_{\phi}\phi_1=1-\frac{t^2 \left(-\chi+\mathit{r}(\phi )^2+\mathit{r}'(\phi )^2\right)}{\mathit{r}(\phi )^2+t^2 \mathit{r}(\phi )^2+2 t \mathit{r}(\phi
	) \mathit{r}'(\phi )+t^2 \mathit{r}'(\phi )^2}=1-\frac{t^2 \left(-\chi+\mathit{r}^2+\mathit{r}'^2\right)}{r_1^2}\\
&
		\partial_{t}\phi_1=\frac{\mathit{r}(\phi )^2}{\mathit{r}(\phi )^2+t^2 \mathit{r}(\phi )^2+2 t \mathit{r}(\phi ) \mathit{r}'(\phi )+t^2 \mathit{r}'(\phi )^2}=\frac{r^2}{r_1^2}.
\end{flalign*}

The determinant of $J$ reads:

\begin{flalign}
	|J|=&\\ \nonumber
	=&\frac{2 \mathit{r}(\phi )^2 \left(\chi t^2+\mathit{r}(\phi )^2\right)}{\left(\mathit{r}(\phi )^2+t^2 \mathit{r}(\phi )^2-2 t \mathit{r}(\phi
		) \mathit{r}'(\phi )+t^2 \mathit{r}'(\phi )^2\right) \left(\mathit{r}(\phi )^2+t^2 \mathit{r}(\phi )^2+2 t \mathit{r}(\phi ) \mathit{r}'(\phi )+t^2
		\mathit{r}'(\phi )^2\right)}=\\  \nonumber
		=&\frac{2 \mathit{r}(\phi )^2 \left(\chi t^2+\mathit{r}(\phi )^2\right)}{r^2_0r^2_1}
		\end{flalign}

From here we know the inverse matrix $J^{-1}=\frac{\partial (\phi,t)}{\partial(\phi_0,\phi_1)}$.
We have:

\begin{flalign}\label{dphidphi1}
\partial_{\phi_0}\phi&=\frac{1}{|J|}\partial_t\phi_1=\frac{\mathit{r}(\phi )^2+t^2 \mathit{r}(\phi )^2-2 t \mathit{r}(\phi ) \mathit{r}'(\phi )+t^2 \mathit{r}'(\phi )^2}{2 \left(\chi t^2+\mathit{r}(\phi
	)^2\right)}=\frac{r_0^2}{2 \left(\chi t^2+\mathit{r}^2\right)},\\ \nonumber
\partial_{\phi_1}\phi&=-\frac{1}{|J|}\partial_t\phi_0=\frac{\mathit{r}(\phi )^2+t^2 \mathit{r}(\phi )^2+2 t \mathit{r}(\phi ) \mathit{r}'(\phi )+t^2 \mathit{r}'(\phi )^2}{2 \left(\chi t^2+\mathit{r}(\phi
	)^2\right)}=\frac{r_1^2}{2 \left(\chi t^2+\mathit{r}^2\right)},\\ \nonumber
\partial_{\phi_0}t&=-\frac{1}{|J|}\partial_\phi\phi_1=-\frac{\left(\chi t^2+\mathit{r}(\phi )^2+2 t \mathit{r}(\phi ) \mathit{r}'(\phi )\right) \left(\mathit{r}(\phi )^2+t^2 \mathit{r}(\phi )^2-2
	t \mathit{r}(\phi ) \mathit{r}'(\phi )+t^2 \mathit{r}'(\phi )^2\right)}{2 \mathit{r}(\phi )^2 \left(\chi t^2+\mathit{r}(\phi )^2\right)}=\\ \nonumber
&=-\frac{r_0^2\left(\chi t^2+\mathit{r}^2+2 t \mathit{r} \mathit{r}'\right) }{2 \mathit{r}^2 \left(\chi t^2+\mathit{r}^2\right)},\\ \nonumber
\partial_t\phi_1&=\frac{1}{|J|}\partial_{\phi}\phi_0=\frac{\left(\chi t^2+\mathit{r}(\phi )^2-2 t \mathit{r}(\phi ) \mathit{r}'(\phi )\right) \left(\mathit{r}(\phi )^2+t^2 \mathit{r}(\phi )^2+2
	t \mathit{r}(\phi ) \mathit{r}'(\phi )+t^2 \mathit{r}'(\phi )^2\right)}{2 \mathit{r}(\phi )^2 \left(\chi t^2+\mathit{r}(\phi )^2\right)}=\\ \nonumber
&=\frac{r_1^2\left(\chi t^2+\mathit{r}^2-2 t \mathit{r}\mathit{r}'\right)}{2 \mathit{r}^2 \left(\chi t^2+\mathit{r}^2\right)}.
\end{flalign}

Now, we can confirm the geometric conclusion that $S$ is a generating function by the exact computation. Namely, from $S=r^2t$ and formulas (\ref{dphidphi1}) we have:

$$S_1= S_\phi \partial_{\phi_0}(\phi)+S_t \partial_{\phi_0}(t)=2rr't\frac{r_0^2}{2 \left(\chi t^2+\mathit{r}^2\right)}+r^2\left(-\frac{r_0^2\left(\chi t^2+\mathit{r}^2+2 t \mathit{r} \mathit{r}'\right) }{2 \mathit{r}^2 \left(\chi t^2+\mathit{r}^2\right)}\right)=-\frac{r_0^2}{2},
$$
$$
S_2=S_\phi \partial_{\phi_1}(\phi)+S_t \partial_{\phi_1}(t)=2rr't\frac{r_1^2}{2 \left(\chi t^2+\mathit{r}^2\right)}+r^2\left(\frac{r_1^2\left(\chi t^2+\mathit{r}^2-2 t \mathit{r}\mathit{r}'\right)}{2 \mathit{r}^2 \left(\chi t^2+\mathit{r}^2\right)}\right)=\frac{r_1^2}{2}.
$$

Thus, we have proved:
\begin{theorem}\label{thm:generating}
The function $S(\phi_0,\phi_1)$, which equals the area of the triangle $\triangle M_0OM_1$ (Fig. \ref{generating}), is a generating 
function of the outer billiard map with respect to the symplectic polar coordinates in the plane.
\end{theorem}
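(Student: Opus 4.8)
The plan is to verify the defining relation (\ref{generatingS}) of a generating function, namely $S_1=-r_0^2/2$ and $S_2=r_1^2/2$, directly for the explicit expression $S=t\,r^2(\phi)$, and then to observe that this is exactly the statement that $S$ generates $T$ with respect to the primitive $\alpha=p\,d\phi$.

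First I would confirm that $t\,r^2(\phi)$ really is the area of the triangle $\triangle M_0OM_1$, so that the geometric object in the statement coincides with the analytic expression I intend to differentiate. Since $M_0=\gamma-t\gamma'$ and $M_1=\gamma+t\gamma'$, the oriented area equals
$$\tfrac12\,M_0\times M_1=\tfrac12(\gamma-t\gamma')\times(\gamma+t\gamma')=t\,(\gamma\times\gamma'),$$
and using $\gamma=r e_\phi$, $\gamma'=r'e_\phi+r e^\perp_\phi$ one gets $\gamma\times\gamma'=r^2$, so the area is $t\,r^2$, as claimed.

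The core step is to compute the partial derivatives $S_1=\partial S/\partial\phi_0$ and $S_2=\partial S/\partial\phi_1$. Because $S$ is most naturally a function of the intermediate coordinates $(\phi,t)$, I would apply the chain rule together with the inverse change of variables already recorded in (\ref{dphidphi1}). From $S=t\,r^2$ we have $S_\phi=2rr't$ and $S_t=r^2$, and then
$$S_1=S_\phi\,\partial_{\phi_0}\phi+S_t\,\partial_{\phi_0}t,\qquad S_2=S_\phi\,\partial_{\phi_1}\phi+S_t\,\partial_{\phi_1}t.$$
Substituting the inverse-Jacobian entries and simplifying, the common denominator $2(\chi t^2+r^2)$ is meant to cancel and the terms carrying $\chi$ and $r'$ should combine so that $S_1=-r_0^2/2$ and $S_2=r_1^2/2$, which is precisely (\ref{generatingS}).

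The only delicate point — the place I would call the main obstacle — is this final algebraic cancellation: one must check that after inserting the somewhat bulky entries of $J^{-1}$, the spurious dependence on $\chi$ disappears and the expressions collapse exactly to $-r_0^2/2$ and $r_1^2/2$ rather than to something merely proportional. As an independent sanity check I would keep the geometric argument of Figure \ref{Sderivative} in reserve: increasing $\phi_1$ by $\epsilon$ changes the area by the thin sliver near $M_1$ plus a correction near $M_0$ that is $o(\epsilon)$ precisely because $M$ is the midpoint of $[M_0,M_1]$, yielding $S_2=|OM_1|^2/2=r_1^2/2$ at once, with the sign-reversed analogue at $\phi_0$ giving $S_1=-r_0^2/2$. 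Agreement of the two routes confirms the relation and hence that $S$ is a generating function of the outer billiard map.
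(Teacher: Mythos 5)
Your proposal is correct and follows essentially the same route as the paper: the paper likewise verifies the defining relations (\ref{generatingS}) by applying the chain rule to $S=t\,r^2(\phi)$ with the inverse-Jacobian entries (\ref{dphidphi1}), after first presenting the geometric sliver argument of Figure \ref{Sderivative} that you hold in reserve. The cancellation you flag as delicate is in fact immediate: factoring out $r_0^2/\bigl(2(\chi t^2+r^2)\bigr)$ leaves $2rr't-(\chi t^2+r^2+2trr')=-(\chi t^2+r^2)$, giving $S_1=-r_0^2/2$ at once, and symmetrically for $S_2$.
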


\section{Partial derivatives of the generating function \emph{S}}

Finally, using the formulas (\ref {dphidphi1}), we can find by the chain rule the derivatives of the function $S$ with respect to $\phi_0,\phi_1$ denoted by sub-indices $1,2$. 
It is important to mention that all the formulas are rational functions in $t$ with coefficients depending on $r(\phi),r'(\phi),r''(\phi)$.

\begin{flalign}\label{Sderivatives}
S_{11}&=\frac{\left(\left(1+t^2\right) \mathit{r}^2-2 t \mathit{r} \mathit{r}'+t^2 \mathit{r}'^2\right) \left(\chi t
	\left(-1+t^2\right) \mathit{r}^2+2 t \mathit{r}^4-2 \mathit{r}^3 \mathit{r}'+t \left(\chi t^2+2 \mathit{r}^2\right)
	\mathit{r}'^2\right)}{2 \mathit{r}^2 \left(\chi t^2+\mathit{r}^2\right)}=\\ \nonumber
&=\frac{r_0^2\left(\chi t
	\left(-1+t^2\right) \mathit{r}^2+2 t \mathit{r}^4-2 \mathit{r}^3 \mathit{r}'+t \left(\chi t^2+2 \mathit{r}^2\right)
	\mathit{r}'^2\right)}{2 \mathit{r}^2 \left(\chi t^2+\mathit{r}^2\right)},\\ \nonumber
S_{22}&=\frac{\left(\left(1+t^2\right) \mathit{r}^2+2 t \mathit{r} \mathit{r}'+t^2 \mathit{r}'^2\right) \left(\chi t
	\left(-1+t^2\right) \mathit{r}^2+2 t \mathit{r}^4+2 \mathit{r}^3 \mathit{r}'+t \left(\chi t^2+2 \mathit{r}^2\right)
	\mathit{r}'^2\right)}{2 \mathit{r}^2 \left(\chi t^2+\mathit{r}^2\right)}=\\ \nonumber
&=\frac{r_1^2 \left(\chi t
	\left(-1+t^2\right) \mathit{r}^2+2 t \mathit{r}^4+2 \mathit{r}^3 \mathit{r}'+t \left(\chi t^2+2 \mathit{r}^2\right)
	\mathit{r}'^2\right)}{2 \mathit{r}^2 \left(\chi t^2+\mathit{r}^2\right)},\\ \nonumber
S_{12}&=\frac{\chi t \left(-\left(1+t^2\right)^2 \mathit{r}^4-2 t^2 \left(-1+t^2\right) \mathit{r}^2 \mathit{r}'^2-t^4 \mathit{r}'^4\right)}{2 \mathit{r}^2 \left(\chi t^2+\mathit{r}^2\right)}=-\frac{\chi t r_0^2r_1^2}{2 \mathit{r}^2 \left(\chi t^2+\mathit{r}^2\right)}.
\end{flalign}

As a consequence of the last formula of (\ref{Sderivatives}), the theorems of Birkhoff and Herman apply, and we get the following:
\begin{theorem}\label{thm:twist}
The cross derivative of $S_{12}(\phi_0,\phi_1)$ is strictly negative on $\Omega$ and hence the outer billiard map $T$ is a {\it positive} twist map with respect to polar coordinates.

Every continuous rotational invariant curve of $T$ is star-shaped and is Lipschitz (the radial function $r(\phi)$ is Lipschitz). 

Moreover, all orbits on the invariant curve are minimizers for the functional (\ref{functional}).
\end{theorem}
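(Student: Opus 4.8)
The plan is to prove the three assertions in order, treating the sign of $S_{12}$ as the only genuine computation and deriving the remaining two statements from the classical theory of twist maps once that sign is settled. First I would read the sign of $S_{12}$ off the last line of (\ref{Sderivatives}),
\[
S_{12}=-\frac{\chi\, t\, r_0^2 r_1^2}{2 r^2\left(\chi t^2+r^2\right)}.
\]
Here $r^2,r_0^2,r_1^2$ are all strictly positive (the points $M,M_0,M_1$ lie off the origin), and $\chi>0$ has already been recorded as the numerator of the positive curvature, so the denominator $2r^2(\chi t^2+r^2)$ is positive as well. Consequently the sign of $S_{12}$ equals $-\operatorname{sign}(t)$, and the whole claim reduces to showing that $t>0$ throughout $\Omega$. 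This is the geometric core of the first assertion: for $M_0=\gamma(\phi)-t\gamma'(\phi)$ and $M_1=\gamma(\phi)+t\gamma'(\phi)$, the normalization $\phi_0<\phi_1<\phi_0+\pi$ places $M_0$ and $M_1$ on opposite sides of the tangency/midpoint $M$, and I would confirm from the transition formulas of Section 3 that $t>0$ is exactly equivalent to the orientation $\phi_0<\phi_1$ (a leading-order expansion gives $\phi_1\approx\phi+t$ and $\phi_0\approx\phi-t$). With $t>0$ established, $S_{12}<0$ on all of $\Omega$.

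Next I would convert $S_{12}<0$ into the twist condition. Since $p_0=-S_1(\phi_0,\phi_1)$, regarding $\phi_1$ as a function of $(\phi_0,p_0)$ and differentiating yields $1=-S_{12}\,\partial\phi_1/\partial p_0$, hence $\partial\phi_1/\partial p_0=-1/S_{12}>0$. This is precisely the statement that $T$ is a positive (monotone) twist map of the half-cylinder $\{(\phi,p):p>r(\phi)^2/2\}$ in the symplectic coordinates $(p,\phi)$. I would then invoke Birkhoff's theorem: every rotational (essential, non-contractible) invariant curve of a monotone twist map is the graph of a Lipschitz function $p=p(\phi)$ over the full circle. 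Since the origin lies strictly inside $\gamma$, on $\Omega$ we have $p=r^2/2\geq(\min_\phi r_\gamma(\phi))^2/2>0$, so $p$ is bounded away from zero along the curve and $r(\phi)=\sqrt{2p(\phi)}$ is therefore Lipschitz and positive; this is exactly the star-shapedness together with Lipschitz regularity of the radial function.

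For the final assertion I would appeal to Aubry--Mather theory in the form due to Herman: on a monotone twist map, every orbit lying on a rotational invariant graph is a (global) minimizer of the action functional (\ref{functional}); applying this to the graphs produced above gives the claim. I expect the only real obstacle to be the verification that $t>0$ everywhere on the non-compact phase space, i.e.\ that the chart $(\phi,t)$ with $t>0$ covers $\Omega$ compatibly with the orientation $\phi_0<\phi_1$; all the rest is either inspection of the formula already computed in (\ref{Sderivatives}) or a direct citation of Birkhoff and Herman. The non-compactness of $\Omega$ (the cylinder is unbounded toward infinity) does not interfere with these two theorems, which apply to any essential invariant curve, and so presents no difficulty at this stage.
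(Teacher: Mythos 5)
Your proposal is correct and follows essentially the same route as the paper: the paper proves Theorem \ref{thm:twist} simply by reading the sign of $S_{12}$ off the last formula of (\ref{Sderivatives}) (with $t>0$, $\chi>0$ implicit in the chart $(\phi,t)$, $t\in(0,\infty)$, covering $\Omega$) and then invoking Birkhoff's theorem for the Lipschitz graph property and Herman's result for minimality, exactly as you do. Your additional verifications --- that $t>0$ is equivalent to $\phi_0<\phi_1$ via the monotonicity $\partial_t\phi_1=r^2/r_1^2>0$, $\partial_t\phi_0=-r^2/r_0^2<0$, and that $r(\phi)=\sqrt{2p(\phi)}$ inherits the Lipschitz property because $p$ is bounded away from zero on $\Omega$ --- are routine details the paper leaves implicit, not a different approach.
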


\section{The inequalities for total rigidity}

Let $A$ be a positive function on the phase space $\Omega$ of outer billiard. We shall denote by $B:=A\circ T$.
\begin{theorem}\label{thm:inequality}Assume that all orbits of an outer billiard are locally minimizing.  Then, for any positive $A$ and $B=A\circ T$ we have the following inequality, provided that the integral $I$ is absolutely converging on $\Omega$:
	$$
I:=	\int_{\Omega}[A^2S_{11}+2ABS_{12}+B^2S_{22}]d\mu\geq0,
	$$
	where the invariant measure $d\mu$ is given by $$d\mu= dxdy=-S_{12}d\phi_0d\phi_1=-S_{12}|J|d\phi dt.$$
	\end{theorem}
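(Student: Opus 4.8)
The plan is to derive the inequality as a direct consequence of the second-order optimality conditions implied by local minimality, combined with an averaging argument over the invariant measure $d\mu$. The starting observation is that for each locally minimizing orbit, any finite segment is a local minimum of the action $F_{M,N}$, so the Hessian of $F_{M,N}$ at the orbit is positive semidefinite. This Hessian is the Jacobi (second-variation) tridiagonal matrix whose diagonal entries are $S_{22}(\phi_{n-1},\phi_n)+S_{11}(\phi_n,\phi_{n+1})$ and whose off-diagonal entries are $S_{12}(\phi_n,\phi_{n+1})$.

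First, I would invoke the construction of the positive discrete Jacobi field, alluded to in the introduction and carried out in Section 5 via the auxiliary function $\omega$. For a locally minimizing orbit one obtains a strictly positive solution of the Jacobi equation; concretely, positivity of the second variation on all finite segments is equivalent (by a standard discrete Sturm-type/continued-fraction argument) to the existence of a positive function $\lambda_n$ satisfying the associated Riccati-type recursion. The weights $A$ and $B=A\circ T$ enter precisely as the test field: the point is that $A$ is an arbitrary positive function on $\Omega$, so along an orbit we set the variation $\delta\phi_n$ proportional to the value of $A$ at the $n$-th orbit point. With this choice the quadratic form in the second variation evaluates, term by term, to the integrand $A^2S_{11}+2ABS_{12}+B^2S_{22}$ once the cross term is matched using $B=A\circ T$ (so that $B$ at the point $(\phi_{n-1},\phi_n)$ and $A$ at $(\phi_n,\phi_{n+1})$ are identified through the shift).

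Second, I would pass from the pointwise/orbitwise nonnegativity to the integral statement by integrating against the invariant measure $d\mu=-S_{12}\,d\phi_0 d\phi_1$. The key algebraic manipulation is a discrete integration by parts (Abel summation / a telescoping rearrangement) that converts the sum $\sum_n \big[\,\text{Hessian quadratic form}\,\big]$ into the single-site integrand. Here the fact that $d\mu$ is $T$-invariant is essential: it allows me to shift the index in the terms carrying $S_{22}$ and $S_{11}$ so that they land on a common fundamental domain and combine into $A^2S_{11}+2ABS_{12}+B^2S_{22}$ under one integral sign. The hypothesis that $I$ is absolutely convergent is exactly what legitimizes this rearrangement on the noncompact phase space $\Omega$, since without absolute convergence the telescoping could fail to vanish at infinity.

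The main obstacle I anticipate is controlling the boundary/tail contributions arising from the noncompactness of $\Omega$: in the finite-segment second variation there are endpoint terms $S(\phi_{M-1},x_M)$ and $S(x_N,\phi_{N+1})$, and in the averaging step one must verify that these contribute nothing in the limit. This is where the absolute convergence of $I$ and the careful choice of weights do the real work. The secondary technical point is to confirm that the cross term produced by the Jacobi quadratic form is genuinely $2ABS_{12}$ rather than two separate $S_{12}$ contributions with mismatched arguments; I expect this to follow cleanly from the invariance relation $B=A\circ T$ together with the symplectic identity $p_n=-S_1(\phi_n,\phi_{n+1})=S_2(\phi_{n-1},\phi_n)$ already recorded in the introduction, which ties the two sites of $S_{12}$ to a single value of the measure density.
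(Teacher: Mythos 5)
Your overall skeleton --- plug the test field $\xi_n=A(T^nx)$, $\xi_{n+1}=B(T^nx)$ into the positive semidefinite Hessian of $F_{M,N}$ and average against the invariant measure --- is a genuinely different mechanism from the paper's. The paper never integrates the second variation directly: it constructs a strictly positive Jacobi field $\{\delta q_n\}$ along each locally minimizing orbit, forms the Riccati-type quantity $\omega=\delta p_n/\delta q_n$ satisfying the evolution relations (\ref{relations}), multiplies these by $B^2$ and $A^2$, and uses $S_{12}<0$ together with the arithmetic--geometric mean bound $B^2\delta q_1^{-1}+A^2\delta q_1\geq 2AB$ to get the \emph{pointwise} inequality $B^2\,\omega\circ T-A^2\omega\leq A^2S_{11}+2ABS_{12}+B^2S_{22}$. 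Since $B=A\circ T$ and $\mu$ is $T$-invariant, the left-hand side integrates to \emph{exactly zero} over a compact invariant annulus $\Omega_{\beta\gamma}$ (where $\omega$ is bounded), so no boundary terms ever arise, and the theorem follows by letting $\beta\to\infty$ using only the convergence of the combined integral $I$. Two internal points of your write-up: your first paragraph invokes the positive Jacobi field but your argument never uses it (the test-field route needs only positive semidefiniteness of the Hessians); and no Abel summation/telescoping actually occurs --- the interior chord terms are literally $G\circ T^n$ with $G=A^2S_{11}+2ABS_{12}+B^2S_{22}$, the cross term matches because $\xi_{n+1}=A\circ T=B$, and the identity $p_n=-S_1=S_2$ plays no role.

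The genuine gap is exactly the step you deferred to ``absolute convergence of $I$'': it does not do that work. With $\xi_{M-1}=\xi_{N+1}=0$, your finite-segment inequality is $\sum_{n=M}^{N-1}G(T^nx)+S_{22}(q_{-1},q_0)A^2(T^Mx)+S_{11}(q_0,q_1)A^2(T^Nx)\geq 0$; integrating and shifting indices gives $(N-M)\,I+\int_\Omega B^2S_{22}\,d\mu+\int_\Omega A^2S_{11}\,d\mu\geq 0$, and to conclude $I\geq 0$ you must divide by $N-M$, which requires the two boundary integrals to be \emph{finite}. The hypothesis controls only the combination $G$, and on the infinite-measure phase space the separate integrals can diverge. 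In fact they do diverge in the paper's own application: with the weights $A=r_0^{-2}$, $B=r_1^{-2}$ of Proposition \ref{prop1}, using (\ref{Sderivatives}) and $(-S_{12})|J|=\chi t$, one checks that $A^2S_{11}(-S_{12})|J|\to \chi/(2r^2)>0$ and $B^2S_{22}(-S_{12})|J|\to \chi/(2r^2)>0$ as $t\to\infty$, so each boundary integral is $+\infty$, while $I$ converges absolutely precisely because these constants are cancelled by the cross term $2ABS_{12}(-S_{12})|J|\to-\chi/r^2$. Your integrated inequality then reads $(N-M)\,I+\infty\geq 0$ and yields nothing. Repairing this forces you either to assume individual integrability (strictly stronger than the theorem's hypothesis), to cut off $A$ (which breaks $B=A\circ T$ near the cutoff and spoils the cross term), or to work on compact invariant annuli with an exact cancellation --- which is essentially the paper's $\omega$-based argument, where separate boundary terms never appear.
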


\begin{proof}

	The first step of the proof is valid for any twist symplectic map of the cylinder.  
	We refer to the papers \cite{B0}  and \cite{Bialy-Tsodikovich} for the ideas of the method and the details. 
	
	Let $T$ be a symplectic positive twist map of the cylinder with the coordinates $(p,q)$, having a generating function $S(q_0,q_1)$. The positive twist condition reads $S_{12}<0$.
	
	A \textit{discrete Jacobi field} along a configuration $\{q_n\}$ is a sequence $\{\delta q_n\}$ satisfying the
	\textit{discrete Jacobi equation}:
	\begin{equation}
		\label{eq:Jacobi}
		b_{n-1}\delta q_{n-1}+a_n
		\delta q_n+b_{n}\delta q_{n+1}=0,
	\end{equation}
	where $a_n:=S_{22}(q_{n-1},q_n)+S_{11}(q_n,q_{n+1}),\
	b_n:=S_{12}(q_n,q_{n+1}).$

	For every locally minimizing orbit $\{(p_n,q_n)\}$, we can construct a strictly positive discrete Jacobi field $\{\delta q_n\}$ along the configuration $\{q_n\}$ normalized by $\delta q_0=1$ (see \cite{B0} and \cite{Bialy-Tsodikovich}). Moreover, by the very construction,  this Jacobi field depends measurably on the initial point $(p_0,q_0)$. In addition, the Jacobi field $\{\delta q_n\}$ naturally corresponds to an invariant non-vertical vector-field $\{(\delta p_n,\delta q_n)\}$ along the orbit $(p_n,q_n)$ with $\delta q_n>0$.

\begin{equation}\label{eq:deltap1}
		\delta p_n=-S_{11}(q_n,q_{n+1})\delta q_n-S_{12}(q_n,q_{n+1})\delta q_{n+1},
	\end{equation}
	or equivalently, due to the equation (\ref{eq:Jacobi}) on Jacobi fields:
\begin{equation}\label{deltap2}
	\delta p_n=S_{22}(q_{n-1},q_{n})\delta q_n+S_{12}(q_{n-1},q_{n})\delta q_{n-1}.
\end{equation}
	Set $${\omega(p_n,q_n):=\frac{\delta p_n}{\delta q_n}}.$$ Then $\omega$ is a measurable function and satisfies the evolution relations:
	\begin{equation}\label{relations}
	\begin{cases}
		\omega(T(p_0,q_0))=S_{22}(q_0,q_1)+S_{12}(q_0,q_1)\delta q_1(p_0,q_0)^{-1},\\
		\omega(p_0, q_0)=-S_{11}(q_0,q_1)-S_{12}(q_0,q_1)\delta q_{1}(p_0,q_0).
	\end{cases}
	\end{equation}
	It then follows from $S_{12}<0$ and $\delta q_n>0$ that $\omega$ satisfies the bounds
	$$
	-S_{11}(q_0,q_1)<\omega(p_0,q_0)<S_{22}(q_{-1},q_0).
	$$
Thus we can state the following
\begin{lemma}For any symplectic positive twist map $T$ of the cylinder:
	\begin{enumerate}
		\item The set $\mathcal M$ is a closed set invariant under $T$.
		\item The function $\omega: \mathcal M\rightarrow\mathbb R$ is a measurable function satisfying the relations (\ref{relations}).
		\item  The inequalities  $-S_{11}(q_0,q_1)<\omega(p_0,q_0)<S_{22}(q_{-1},q_0) $ are satisfied, for any point $(p_0,q_0)\in\mathcal M$.
			\end{enumerate}
\end{lemma}

In the second  step, we specialize to the assumptions of Theorem \ref{thm:maximizing}.  Since all the orbits of $T$ are assumed to be locally minimizing, we have  $\mathcal M=\Omega$. Moreover, it follows from the bounds $(3)$ of the lemma and the explicit expressions (\ref{Sderivatives}) of the derivatives 
$S_{11}, S_{22}$, that the function $\omega$ is bounded on compact sets and hence can be integrated. Indeed, $S_{11}, S_{22}$ in (\ref{Sderivatives}) are rational functions in $t$ (with no real poles) with the coefficients depending on $r(\phi),r'(\phi),r''(\phi)$.

Now, we choose an arbitrary positive continuous function $A$ and $B:=A\circ T$, we multiply the first and the second equations of (\ref{relations}) by  $B^2$ and $A^2$ respectively, and subtract, getting:
	$$
	B^2\omega(T(p_0,q_0))-A^2\omega(p_0,q_0)=A^2S_{11} +B^2 S_{22}+S_{12}(B^2\delta q_1^{-1}+
	A^2\delta q_1).
	$$

	From $S_{12}<0$ and from the arithmetic-geometric mean inequality, we get:
	$$
	B^2\omega(T(p_0,q_0))-A^2\omega(p_0,q_0)\leq A^2S_{11} +B^2 S_{22}+2ABS_{12}.
	$$

	 Now we can integrate this inequality against the invariant measure over a compact invariant annular region $\Omega_{\beta\gamma}$, which is bounded by $\gamma$ and a rotational invariant curve $\beta$.
	We get the inequality:
	$$
	0\leq\int_{\Omega_{\beta\gamma}}[A^2S_{11} +B^2 S_{22}+2ABS_{12}]d\mu.
	$$

	It is valid for any choice of positive weights $A$ and $B:=A\circ T$. 
	If the integral converges we can take the limit when $\beta$ is chosen approaching the infinity to get 
	$$
	0\leq\int_{\Omega}[A^2S_{11} +B^2 S_{22}+2ABS_{12}]d\mu,
	$$
	completing the proof of Theorem \ref{thm:inequality}.

\end{proof}
	\section{A consequence of Theorem \ref{thm:inequality} }

	In this section, we shall use Theorem \ref{thm:inequality} in order to prove the following:
	\begin{proposition}\label{prop1}
		If all orbits of an outer billiard $\gamma$ are locally minimizing, then the following inequality holds:
		\begin{equation}\label{eq:q}
			\int_{0}^{2\pi}\frac{\sqrt \chi}{r}d\phi\geq2\pi,
		\end{equation}where $r$ is a radial function of $\gamma$ and $
		\chi=r^2(\phi)+2r'^2(\phi)-r(\phi)r''(\phi).
		$
	\end{proposition}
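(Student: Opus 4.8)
The plan is to apply Theorem~\ref{thm:inequality} with the explicit weight $A=1/r_0^2$ (equivalently $A=1/(2p)$, the reciprocal squared distance to the origin $O$), so that automatically $B=A\circ T=1/r_1^2$. This choice is dictated by two competing requirements. First, the non-compactness of $\Omega$ forces the weight to decay as the phase point escapes to infinity ($t\to\infty$): a bounded weight such as $A\equiv 1$ makes the integrand grow like $t^2$ and $I$ diverges. Second, among decaying weights the power $1/r_0^2$ is distinguished, because $S_{11},S_{22}$ in (\ref{Sderivatives}) carry the factors $r_0^2,r_1^2$ and $S_{12}$ carries $r_0^2r_1^2$; the weight $1/r_0^2$ cancels these and keeps the integrand a \emph{rational} function of $t$, whereas $1/r_0$ would produce the irrational factor $r_0r_1$ and spoil the computation.

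Next I would pass to the coordinates $(\phi,t)$, in which the invariant measure takes the remarkably simple form $d\mu=-S_{12}|J|\,d\phi\,dt=\chi t\,d\phi\,dt$ (the factors $r_0^2r_1^2$ and $\chi t^2+r^2$ cancel between $-S_{12}$ and $|J|$). Substituting (\ref{Sderivatives}) and collecting terms, the integrand $[A^2S_{11}+2ABS_{12}+B^2S_{22}]\,d\mu$ becomes, after clearing $r_0^2,r_1^2$, a rational function $R(\phi,t)=\dfrac{2\chi t^4K_3+2\chi r^2t^2K_1}{(\chi t^2+r^2)\,r_0^2r_1^2}$, where $K_3=(r^2+r'^2)^2-\chi(r^2-r'^2)$ and $K_1=rr''-3r'^2$. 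The essential algebraic check here is that the apparent leading term of order $t^5$ in the numerator cancels identically; this is exactly what guarantees that $\int_0^\infty R\,dt$ converges (the integrand decays like $t^{-2}$), so that Theorem~\ref{thm:inequality} may be invoked.

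I would then evaluate the inner integral by splitting $R$ into partial fractions in the variable $t^2$, the poles arising from $\chi t^2+r^2$ and from the quartic $r_0^2r_1^2$. A pleasant simplification occurs: the residue at $\chi t^2+r^2=0$ has numerator exactly $2\chi$, whence $\int_0^\infty\frac{2\chi}{\chi t^2+r^2}\,dt=\frac{\pi\sqrt\chi}{r}$, and this is the precise source of the $\sqrt\chi$ in (\ref{eq:q}). The remaining partial fraction integrates, via residues at the complex roots $t=\frac{\pm rr'+ir^2}{r^2+r'^2}$ of $r_0^2r_1^2$, to $-\frac{\pi\chi}{r^2+r'^2}$. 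Thus $\int_0^\infty R\,dt=\pi\bigl(\frac{\sqrt\chi}{r}-\frac{\chi}{r^2+r'^2}\bigr)$, and by Theorem~\ref{thm:inequality} its $\phi$-integral over $[0,2\pi]$ is $\geq0$.

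Finally I would dispose of the spurious second term by recognizing a total derivative: $\frac{\chi}{r^2+r'^2}=1+\frac{r'^2-rr''}{r^2+r'^2}=1-\frac{d}{d\phi}\arctan\!\bigl(\tfrac{r'}{r}\bigr)$, so that $\int_0^{2\pi}\frac{\chi}{r^2+r'^2}\,d\phi=2\pi$ (the derivative part vanishing by periodicity of $r,r'$). Combining, $0\leq I=\pi\bigl(\int_0^{2\pi}\frac{\sqrt\chi}{r}\,d\phi-2\pi\bigr)$, which is exactly (\ref{eq:q}). The main obstacles I expect are the two bookkeeping steps: verifying the order-$t^5$ cancellation in the numerator (i.e.\ the absolute convergence needed to legitimately apply Theorem~\ref{thm:inequality} on all of $\Omega$), and carrying out the residue evaluation for the quartic $r_0^2r_1^2$ cleanly enough to obtain the exact constant $-\frac{\pi\chi}{r^2+r'^2}$.
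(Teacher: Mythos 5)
Your proposal is correct and follows essentially the same route as the paper's proof: the same weights $A=r_0^{-2}$, $B=r_1^{-2}$, the same passage to the $(\phi,t)$ coordinates (where indeed $-S_{12}|J|=\chi t$), and the same partial-fraction split isolating $\frac{2\chi}{\chi t^2+r^2}$ --- your rational function $R$ agrees exactly with the paper's expression (\ref{eq:apart}), whose remaining terms integrate in $t$ to $-\frac{\pi\chi}{r^2+r'^2}$ as you claim. The only cosmetic differences are in evaluating the elementary integrals: the paper uses explicit $\arctan$/$\log$ antiderivatives for the remaining part (note that your half-line residue computation additionally needs the observation that this part is even in $t$, since $t\mapsto -t$ swaps $r_0^2$ and $r_1^2$, so that $\int_0^{\infty}=\frac12\int_{-\infty}^{\infty}$), and it computes $\int_0^{2\pi}\frac{\chi}{r^2+r'^2}\,d\phi=2\pi$ as the total curvature $\int_{\gamma}k\,ds$ rather than via your total-derivative identity $\frac{\chi}{r^2+r'^2}=1-\frac{d}{d\phi}\arctan\left(\frac{r'}{r}\right)$, which is an equally valid elementary substitute.
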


	\begin{proof}

We shall choose the weights $A,B$ as follows:
		$$
		A:=r_0^{-2}=-\frac{1}{2S_1}, \quad B:=r_1^{-2}=\frac{1}{2S_2},
		$$

		With this choice of $A,B$ we compute the integral $I$ of  Theorem \ref{thm:inequality}.
		$$
	I=	\int_{\Omega}[A^2S_{11} +B^2 S_{22}+2ABS_{12}]d\mu=\int_{\Omega}[A^2S_{11} +B^2 S_{22}+2ABS_{12}](-S_{12})d\phi_0d\phi_1=
		$$
		$$=\int_{0}^{2\pi}d\phi\int_{0}^{+\infty}dt[A^2S_{11} +B^2 S_{22}+2ABS_{12}](-S_{12})|J|.
		$$

		Using formulas (\ref{Sderivatives}), we compute:
		\begin{flalign}\label{eq:apart}
		&[A^2S_{11}+2ABS_{12}+B^2S_{22}](-S_{12})J=\\ 
		\nonumber
		&=\frac{2 \chi t^2 \left(\left(1+t^2\right) \mathit{r}^2 \left(-\chi+\mathit{r}^2\right)+\left(\chi t^2+\left(-1+2 t^2\right) \mathit{r}^2\right) \mathit{r}'^2+t^2 \mathit{r}'^4\right)}{\left(\chi t^2+\mathit{r}^2\right) \left(\left(1+t^2\right)^2 \mathit{r}^4+2 t^2 \left(-1+t^2\right) \mathit{r}^2 \mathit{r}'^2+t^4 \mathit{r}'^4\right)}=\\ \nonumber
		&=
		\frac{2 \chi}{\chi t^2+\mathit{r}^2}+\frac{\chi \left(-\mathit{r}+t \mathit{r}'\right)}{\mathit{r} \left(\mathit{r}^2+t^2 \mathit{r}^2-2 t \mathit{r}\mathit{r}'+t^2 \mathit{r}'^2\right)}-\frac{\chi \left(\mathit{r}+t \mathit{r}'\right)}{\mathit{r}\left(\mathit{r}^2+t^2 \mathit{r}^2+2 t \mathit{r} \mathit{r}'+t^2 \mathit{r}'^2\right)}.
		\end{flalign}

		Now, we denote the three summands in the last expression of (\ref{eq:apart}) $F_1,F_2,F_3$. We simplify the integral of (\ref{eq:apart}) by integrating with respect to $t$ every summand:

		\begin{flalign}\label{eq:intgral}
I=&\int_{0}^{2\pi}d\phi\int_{0}^{+\infty}dt[A^2S_{11} +B^2 S_{22}+2ABS_{12}](-S_{12})|J|\\
\nonumber
=&\int_{0}^{2\pi}d\phi\int_{0}^{+\infty}dt
		[F_1+F_2+F_3].
		\end{flalign}
		
		Next, we compute separately the integrals for $F_1$ and $F_2,F_3$.
		$$
		\int_{0}^{2\pi}d\phi \int_{0}^{+\infty}dt [F_1]=\int_{0}^{2\pi}d\phi \left.\frac{2 \sqrt{\chi} \text{ArcTan}\left[\frac{\sqrt{\chi} t}{\mathit{r}(\phi )}\right]}{\mathit{r}(\phi )}\right|_0^{+\infty}=\int_{0}^{2\pi}\frac{\pi\sqrt \chi}{r}d\phi.
		$$

		For $F_2$ and $F_3$, we have:

		\begin{flalign*}
			&\int_{0}^{2\pi}d\phi \int_{0}^{T}dt [F_2]=\\ 
		&=\int_{0}^{2\pi}d\phi\left[\left.\frac{\chi \text{ArcTan}\left[\frac{\mathit{r}'}{\mathit{r}}-\frac{t \left(\mathit{r}^2+\mathit{r}'^2\right)}{\mathit{r}^2}\right]}{\mathit{r}^2+\mathit{r}'^2}+\frac{\chi \text{Log}\left[\mathit{r}^2-2 t \mathit{r} \mathit{r}'+t^2 \left(\mathit{r}
			^2+\mathit{r}'^2\right)\right] \mathit{r}'}{2 \mathit{r} \left(\mathit{r}^2+\mathit{r}'^2\right)}\right]\right|_0^{T},\\
			&\int_{0}^{2\pi}d\phi \int_{0}^{T}dt [F_3]=\\
		&=\int_{0}^{2\pi}d\phi\left[\left.-\frac{\chi \text{ArcTan}\left[\frac{\mathit{r}'}{\mathit{r}}+\frac{t \left(\mathit{r}^2+\mathit{r}'^2\right)}{\mathit{r}^2}\right]}{\mathit{r}^2+\mathit{r}'^2}-\frac{\chi \text{Log}\left[\mathit{r}^2+2 t \mathit{r} \mathit{r}'+t^2 \left(\mathit{r}
			^2+\mathit{r}'^2\right)\right] \mathit{r}'}{2 \mathit{r} \left(\mathit{r}^2+\mathit{r}'^2\right)}\right]\right|_0^{T}.
		\end{flalign*}

		Summing the last two formulas, and then evaluating from $0$ to $T\rightarrow+\infty$, we see that the sum of the terms with $\rm Log$ vanishes and so:
			$$\int_{0}^{2\pi}d\phi \int_{0}^{+\infty}dt [F_2+F_3]=-
		\int_{0}^{2\pi}d\phi\left[\frac{\chi\pi}{r^2+r'^2}\right]
		$$

		The last integral can be computed:
		$$
-	\pi	\int_{0}^{2\pi}d\phi\left[\frac{\chi}{r^2+r'^2}\right]=-\pi\int_{0}^{2\pi}d\phi \sqrt{r^2+r'^2}\frac{\chi}{(r^2+r'^2)^{\frac{3}{2}}}=-\pi\int_{\gamma}k(s)ds=-2\pi^2,
		$$
where we used for the arc-length and for the curvature the expressions: $$ds=d\phi \sqrt{r^2+r'^2}, \ k=\frac{\chi}{(r^2+r'^2)^{\frac{3}{2}}}.$$

		Therefore, altogether we have:
		$$
	I=	\int_{\Omega}[A^2S_{11}+2ABS_{12}+B^2S_{22}]d\mu=\pi\left(\int_{0}^{2\pi}\frac{\sqrt \chi}{r}d\phi-2\pi\right).
		$$
Thus,  Theorem \ref{thm:inequality} yields the inequality (\ref{eq:q}). 
		\end{proof}
		\section{A consequence of the Blaschke-Santalo inequality}

		In this section, we consider an arbitrary simple closed convex  $C^2$ curve $\gamma$ in the plane. Let $\Gamma$ be the convex body bounded by $\gamma$. Using the Blaschke-Santalo inequality, we shall prove the inequality opposite to (\ref {eq:q}), provided that the origin is placed at the Santalo point of $\Gamma$. 
		
		Let me remind the relevant notions.
		For any point in the interior, $x\in Int(\Gamma)$ one defines $\Gamma^x$, which is the polar dual of $\Gamma$ with respect to $x$. By definition, the Santalo point of $\Gamma$ is the unique  point $x\in Int(\Gamma)$, which gives the minimum for the $Area(\Gamma^x)$ (see e.g. \cite{lutwak}).  

Suppose that the convex curve $\gamma$ is such that the origin coincides with the Santalo point of the body $\Gamma$. We denote $\Gamma^*$ the polar dual of $\Gamma$ with respect to the Santalo point. 
Then Blaschke-Santalo inequality states (see e.g. \cite{lutwak}) $$Area(\Gamma)Area(\Gamma^*)\leq\pi^2,$$ with the equality only for ellipses.
		Now we can state the following:
		\begin{proposition}\label{prop2}
			Let $\gamma=\partial \Gamma$ be such that the Santalo point of $\Gamma$ is at the origin. Then 
			\begin{equation}\label{eq:qq}
					\int_{0}^{2\pi}\frac{\sqrt \chi}{r}d\phi\leq2\pi,
				\end{equation}where $r$ is a radial function of $\gamma$ and $
				\chi=r^2(\phi)+2r'^2(\phi)-r(\phi)r''(\phi).
				$ Moreover the equality occurs  if and only if $\gamma$ is an ellipse.
		\end{proposition}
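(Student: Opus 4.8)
The plan is to read both sides of (\ref{eq:qq}) geometrically. The constant $2\pi$ will be produced by the Blaschke-Santalo inequality through the product $\mathrm{Area}(\Gamma)\,\mathrm{Area}(\Gamma^*)$, while the left-hand integral $\int_0^{2\pi}\frac{\sqrt\chi}{r}\,d\phi$ will be controlled by these two areas via the Cauchy-Schwarz inequality. The whole argument thus rests on expressing both areas in the radial parametrization of $\gamma$.

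The computational heart is to identify $\chi/r^4$ as the area density of the polar dual $\Gamma^*$. Writing $\gamma(\phi)=r(\phi)e_\phi$, the tangent line to $\gamma$ at $\gamma(\phi)$ has unit outer normal $n=(r\,e_\phi-r'e_\phi^\perp)/\sqrt{r^2+r'^2}$ and distance to the origin $h=\langle\gamma,n\rangle=r^2/\sqrt{r^2+r'^2}$; since the origin lies inside $\Gamma$, we have $h>0$. The corresponding boundary point of $\Gamma^*$ is therefore $\gamma^*(\phi)=n/h=\frac1r e_\phi-\frac{r'}{r^2}e_\phi^\perp$. Computing the enclosed area by Green's formula $\mathrm{Area}(\Gamma^*)=\frac12\int_0^{2\pi}\gamma^*\times(\gamma^*)'\,d\phi$ in the moving orthonormal frame $\{e_\phi,e_\phi^\perp\}$ and using $e_\phi'=e_\phi^\perp$, $(e_\phi^\perp)'=-e_\phi$, I expect the integrand to collapse precisely to $\chi/r^4$, so that
\[
\mathrm{Area}(\Gamma^*)=\frac12\int_0^{2\pi}\frac{\chi}{r^4}\,d\phi,\qquad \mathrm{Area}(\Gamma)=\frac12\int_0^{2\pi}r^2\,d\phi.
\]
Here one must check that $\gamma^*$ is traversed once and positively as $\phi$ runs over $[0,2\pi]$ (it is, because $n(\phi)$ rotates monotonically for a curve of positive curvature), which is reflected in the positivity of the density $\chi/r^4$.

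Granting these two identities, I would write $\frac{\sqrt\chi}{r}=\frac{\sqrt\chi}{r^2}\cdot r$ and apply Cauchy-Schwarz:
\[
\Bigl(\int_0^{2\pi}\frac{\sqrt\chi}{r}\,d\phi\Bigr)^2\le\int_0^{2\pi}\frac{\chi}{r^4}\,d\phi\cdot\int_0^{2\pi}r^2\,d\phi=4\,\mathrm{Area}(\Gamma^*)\,\mathrm{Area}(\Gamma).
\]
Because the origin is the Santalo point, $\Gamma^*$ is exactly the polar dual taken at the Santalo point, so the Blaschke-Santalo inequality gives $\mathrm{Area}(\Gamma)\,\mathrm{Area}(\Gamma^*)\le\pi^2$; taking square roots yields (\ref{eq:qq}). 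For the equality statement, equality in (\ref{eq:qq}) forces equality simultaneously in Cauchy-Schwarz and in Blaschke-Santalo, and the latter already characterizes ellipses. Conversely, if $\gamma$ is an ellipse it is centrally symmetric, so its center is the Santalo point and coincides with the origin, and a direct computation from $1/r^2=\cos^2\phi/A^2+\sin^2\phi/B^2$ shows $\chi/r^6\equiv 1/(AB)^2$, which is precisely the Cauchy-Schwarz equality condition $\sqrt\chi/r^2=c\,r$; hence both inequalities are equalities and (\ref{eq:qq}) holds with equality exactly for ellipses.

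The main obstacle is the second step: correctly setting up the polar-dual boundary curve and verifying, after the Green's-theorem simplification, that the area density reduces exactly to $\chi/r^4$, keeping track of orientation and of the fact that $\phi$ is not the polar angle of $\gamma^*$. Once this identity is secured, the Cauchy-Schwarz and Blaschke-Santalo steps are immediate and the equality analysis reduces to the elementary ellipse check above.
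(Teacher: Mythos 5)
Your proposal is correct and is essentially the paper's own proof: the same Cauchy--Schwarz splitting $\frac{\sqrt\chi}{r}=r\cdot\frac{\sqrt\chi}{r^2}$ bounding the integral by $\sqrt{4\,\mathrm{Area}(\Gamma)\,\mathrm{Area}(\Gamma^*)}$, followed by the Blaschke--Santalo inequality, whose equality case yields the ellipse characterization. The only cosmetic difference is bookkeeping: the paper encodes your dual-area identity via the support function $h=1/r$ of $\Gamma^*$, writing $\chi=(h+h'')/h^3$ and $\mathrm{Area}(\Gamma^*)=\frac12\int_0^{2\pi}(h^2-h'^2)\,d\phi$, whereas you obtain the same identity $\mathrm{Area}(\Gamma^*)=\frac12\int_0^{2\pi}\chi r^{-4}\,d\phi$ by parametrizing $\partial\Gamma^*$ and applying Green's formula (and you are somewhat more explicit on the ``if'' direction of the equality case).
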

		
		\begin{proof}
		
		Let $h$ be the support function of $\Gamma^*$. Then $h(\phi)=\frac{1}{r(\phi)}$, and we compute:
		$$
		\chi(\phi)=r^2(\phi)+2r'^2(\phi)-r(\phi)r''(\phi)=\frac{h(\phi)+h''(\phi)}{h(\phi)^3}
		$$

		Therefore, we have:
		$$
		\int_{0}^{2\pi}\frac{\sqrt \chi}{r}d\phi=\int_{0}^{2\pi} h\frac{\sqrt{h+h''}}{h^{3/2}}d\phi=\int_{0}^{2\pi}\frac{1}{h}\sqrt{h(h+h'')} d\phi.
		$$

		Then, via the Cauchy-Schwartz inequality we have:
		$$
		\int_{0}^{2\pi}\frac{1}{h}\sqrt{h(h+h'')} d\phi\leq\left( \int_0^{2\pi}\frac{1}{h^2}d\phi\right)^{\frac{1}{2}}\left(\int_{0}^{2\pi}(h^2+hh'')d\phi\right)^{\frac{1}{2}}=
		$$
		$$
	=\left(	\int_0^{2\pi}\frac{1}{h^2}d\phi\right)^{\frac{1}{2}
	}\left(\int_{0}^{2\pi}(h^2-h'^2)d\phi\right)^{\frac{1}{2}}=\sqrt{2Area(\Gamma)}\sqrt{2Area(\Gamma^*)}.
		$$

		 Applying  the Blaschke-Santalo inequality we conclude that:
		 	$$
		 	\int_{0}^{2\pi}\frac{\sqrt \chi}{r}d\phi\leq 2\pi,
		 	$$with the equality only for ellipses.
\end{proof}

\section {Proof of Theorem \ref{thm:maximizing} and Corollary \ref{conjugate}}

		 	Let $\gamma$ be an outer billiard such that all billiard orbits are locally minimizing. Then for an arbitrary choice of the origin in the plane,  Proposition \ref{prop1} implies the inequality (\ref{eq:q}). On the other hand, if 
		 	we choose the origin at the Santalo point, then (\ref{eq:qq}) of Proposition \ref{prop2} gives the opposite inequality. Therefore,  we have the equality in (\ref{eq:qq}) and hence the curve $\gamma$ is an ellipse. This completes the proof of Theorem \ref{thm:maximizing}. \qed
		 	
		 \begin{proof}[Proof of Corrollary \ref{conjugate}]
		 	We need to show that if the curve $\gamma$ is not an ellipse, then there exist a billiard configuration which has conjugate points, i.e a non-trivial Jacobi field vanishing at two points. If this is not the case, then all finite segments of billiard configurations must have non-degenerate Hessian matrices $\delta^2 F_{MN}$. But then all these matrices must be positive definite, by a continuity argument. Therefore, all orbits are locally minimizing and by Theorem \ref{thm:maximizing} the curve $\gamma$ is an ellipse. This contradiction completes the proof of Corollary \ref{conjugate}.
		 	
		 	\end{proof}
		 	
		\smallskip
		
		{\bf Remark.} 
		{\it One can prove by a straightforward computation that the non-standard generating function $S$ and the standard one $H$ satisfy the condition (GA) of \cite{Bialy-Tsodikovich}. This condition guarantees that the classes $\mathcal M_S, \mathcal M_H$
		of locally minimizing orbits for $S$ and $H$ coincide. This fact implies in particular, that if the curve is not an ellipse then the conjugate points necessarily  exist also for the functional corresponding to $H$.
		We will not dwell on this in this paper.}

\end{document}